\def\rmlabel{\upshape({\itshape \roman*\,})}
\theoremstyle{definition}
\newtheorem{definition}{Definition}
\theoremstyle{plain}
\newtheorem{theorem}[definition]{Theorem}
{Lemma}
{Theorem}
\newtheorem{lemma}[definition]{Lemma}
\theoremstyle{remark}
\newcommand{\ex}{\mathrm{ex}}
\newsavebox\myboxA
\newsavebox\myboxB
\newlength\mylenA
\newcommand*\xoverline[2][0.75]{%
	\sbox{\myboxA}{$\m@th#2$}%
	\setbox\myboxB\null
	\ht\myboxB=\ht\myboxA%
	\dp\myboxB=\dp\myboxA%
	\wd\myboxB=#1\wd\myboxA
	\sbox\myboxB{$\m@th\overline{\copy\myboxB}$}
	\setlength\mylenA{\the\wd\myboxA}
	\addtolength\mylenA{-\the\wd\myboxB}%
	\ifdim\wd\myboxB<\wd\myboxA%
	\rlap{\hskip 0.5\mylenA\usebox\myboxB}{\usebox\myboxA}%
	\else
	\hskip -0.5\mylenA\rlap{\usebox\myboxA}{\hskip 0.5\mylenA\usebox\myboxB}%
	\fi}
\begin{document}

\title[Ramsey numbers of even cycles versus stars ]
{The Ramsey number of \\a long even cycle versus a star }

\author{Peter Allen}
\address{Department of Mathematics\\
	London School of Economics\\
	 London, United Kingdom}
\email{\tt p.d.allen@lse.ac.uk}
\author{Tomasz \L{}uczak}
\address{Adam Mickiewicz University\\
Faculty of Mathematics and Computer Science\\
Pozna\'n, Poland}
\email{\tt tomasz@amu.edu.pl}
\author{Joanna Polcyn}
\address{Adam Mickiewicz University\\
Faculty of Mathematics and Computer Science\\
Pozna\'n, Poland}
\email{\tt joaska@amu.edu.pl}
\author{Yanbo Zhang}
\address{Hebei Normal University\\
School of Mathematical Sciences\\ Shijiazhuang, P.R.China}
\email{\tt ybzhang@163.com}
\thanks{The second author was partially supported by National Science Centre, Poland, grant 2017/27/B/ST1/00873.
	The fourth author was partially supported by NSFC under grant numbers 11601527, 11971011, and 11801520.}

\begin{abstract}
We find the exact value of the Ramsey number $R(C_{2\ell},K_{1,n})$, when $\ell$ and   $n=O(\ell^{10/9})$
are large. Our result is closely related to the behaviour of
 Tur\'an number $\ex(N, C_{2\ell})$ for an even cycle 
whose length grows quickly with $N$.
\end{abstract}

\keywords {Ramsey number, Tur\'an number, cycle, star}

\subjclass[2010]{Primary: 05C55, secondary:  05C35, 05C38. }

\maketitle

\date{October 16th, 2020}

\section{Introduction}

For a graph $H$ by
$$\ex(N,H)=\max\{|E|: G=(V,E)\not\supseteq H  \textrm{\ and\ } |V|=N\}$$
we denote its Tur\'an number. 
Let us recall that for graphs $H$ with chromatic number at least three the asymptotic value of $\ex(N,H) $ was
determined over fifty years ago by Erd\H{o}s and Stone~\cite{ESt}, and Erd\H{o}s and Simonovits~\cite{ESi}, while for most bipartite 
graphs $H$  the behaviour of $\ex(N, H)$ is not well understood. Let us recall some results on the case  when $H$ is an even cycle $C_{2\ell}$. 
The best upper bound for  $\ex(N, C_{2\ell})$ for general $\ell$ is due to Bukh and Jiang~\cite{BJ}, who improved the classical theorem of Bondy and Simonovits~\cite{BS} to 
$$\ex(N, C_{2\ell})\le 80\sqrt{\ell} \ln \ell N^{1+1/\ell}+ 10\ell^2 N. $$
The best lower bound which holds for all $\ell$ follows from the construction of  regular graphs 
of large girth  by   Lubotzky,  Phillips, and Sarnak~\cite{LPS}, which gives 
$$\ex(N, C_{2\ell})\ge N^{1+(2+o(1))/3 \ell}.$$
The correct exponent $\alpha_\ell$ for which $\ex(N,C_{2\ell})=N^{\alpha_\ell+o(1)}$ is known only for $\ell=2,3,5$, when it is equal  to 
$1+1/\ell$   (see the survey of F\"uredi and Simonovits~\cite{FS}  and references therein), and finding it for 
every $\ell$ is one of the major open problems in extremal graph theory. 
Can it become easier when we allow
the length of an even cycle to grow with $N$? This paper was inspired by this question.  However,   
instead of the original problem we  consider its,  nearly  equivalent,
partition version, and, instead of $\ex(N,C_{2\ell})$, we study the Ramsey number   
$R(C_{2\ell}, K_{1,n})$. Although typically we define Ramsey numbers using colours, here and below 
we rather view $R(C_{2\ell}, K_{1,n})$ as the minimum $N$ such that each graph on $N$ vertices and 
minimum degree at least $N-n$ contains a copy of $C_{2\ell}$. 
Note that if $\ex(N,H)\le M$, then 
$$R(H, K_{1,N-\lfloor 2M/N\rfloor})\le N\,.$$
On the other hand, if there exists a $d$-regular $H$-free graph on $N$ vertices, then, clearly, 
$$R(H, K_{1,N-d}) >N\,.$$
Consequently, from the result of Bukh and Jiang and the construction of Lubotzky, Phillips, and Sarnak mentioned above we get  
\begin{equation}\label{eq1}
n+ 2n^{(2+o(1))/3\ell} \le R(C_{2\ell}, K_{1,n})\le n+ 161\sqrt{\ell} \ln \ell n^{1/\ell}+ 22\ell^2.
\end{equation} 
Since a graph on $N$ vertices with  minimum degree at least $N/2$ is hamiltonian (Dirac~\cite{D}), 
and if its  minimum degree is larger than $N/2$, it is 
pancyclic (Bondy~\cite{B}), for  $\ell \ge n\ge 2 $, we have $R(C_{2\ell},K_{1,n})=2\ell$. 
Moreover, Zhang, Broersma, and Chen~\cite{ZBC} showed that 
if $n/2<\ell<n$ then $R(C_{2\ell},K_{1,n})=2n$, while for  $3n/8+1\le \ell\le n/2$, we get $R(C_{2\ell},K_{1,n})=4\ell-1$. Our main result 
determines the value of  $R(C_{2\ell},K_{1,n})$ for all large $\ell$, and  $n$ not much larger than $\ell$. 

\begin{theorem}\label{thm1}
For every $t\ge 2$,  $\ell\ge (19.1t)^9$, and $n$ such that   $(t-1)(2\ell-1)\le n-1 < t(2\ell-1)$, we have 
\begin{equation*}\label{eq:main}
	R(C_{2\ell}, K_{1,n}) =f_t(\ell,n)+1,
\end{equation*}
where 
\begin{equation*}\label{eq:main2}
f_t(\ell,n)=\max\{t(2\ell-1), n+ \lfloor (n-1)/t\rfloor  \}.
\end{equation*}

\end{theorem}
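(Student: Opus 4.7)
The statement splits naturally into a lower bound (construction of an extremal $C_{2\ell}$-free graph) and an upper bound (a structural argument ruling out $N = f_t(\ell,n) + 1$).

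For the lower bound, when $f_t(\ell,n) = t(2\ell-1)$ I would take the disjoint union of $t$ copies of $K_{2\ell-1}$: each component avoids $C_{2\ell}$, has minimum degree $2\ell-2$, and the hypothesis $n-1 \ge (t-1)(2\ell-1)$ guarantees $t(2\ell-1) - n \le 2\ell-2$, matching the required minimum degree. When $f_t(\ell,n) = n + \lfloor (n-1)/t \rfloor$, I would construct a graph whose $2$-connected blocks are cliques of sizes in $[\lfloor(n-1)/t\rfloor+1,\, 2\ell-1]$, some pairs glued at a single cut vertex so that the total vertex count equals $n + \lfloor(n-1)/t\rfloor$ exactly; the number and size of the cliques, and the pattern of gluings, are dictated by divisibility. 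Because any cycle of such a graph lies entirely inside a single block, no $C_{2\ell}$ appears, and the minimum-degree condition translates to the lower bound on block sizes.

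For the upper bound, let $G$ have $N = f_t(\ell,n)+1$ vertices and $\delta(G) \ge N - n$, and suppose for contradiction that $C_{2\ell} \not\subseteq G$. The central step I would aim to establish is a pancyclicity-type lemma in the sparse regime: any $2$-connected subgraph $H \subseteq G$ with $|V(H)| \ge 2\ell$ and minimum degree above a suitable threshold (linear in $\ell$) must contain a cycle of length exactly $2\ell$. Granted this, every $2$-connected block of $G$ has at most $2\ell - 1$ vertices, so the block-cut structure of $G$ is a ``cactus'' of small blocks. Each non-cut vertex $v$ has all its neighbors in its unique block $B(v)$, so $\deg(v) = |B(v)| - 1$, forcing $|B(v)| \ge N - n + 1$; each cut vertex lies in only a bounded number of blocks. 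A careful vertex count — adding up block sizes, subtracting the over-counting at cut vertices, and using the block-size lower bound $N-n+1$ — would yield $N \le \max\{t(2\ell-1),\, n + \lfloor(n-1)/t\rfloor\} = f_t(\ell,n)$, contradicting $N = f_t(\ell,n)+1$. The extremal equality cases in this count match exactly the two lower-bound constructions above, which is a good sanity check.

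The hard part will be the pancyclicity lemma: classical theorems of Bondy and others require $\delta \sim N/2$, whereas here $\delta$ can be as small as $\lfloor(n-1)/t\rfloor + 1$. I would attack it via path-rotation inside a putative large $2$-connected subgraph, exploiting the fact that the endpoints of a longest path have $\delta$ neighbours and that their neighborhoods, by the Bukh--Jiang bound $\mathrm{ex}(N,C_{2\ell}) = O(\sqrt{\ell}\log\ell\, N^{1+1/\ell}) + O(\ell^2 N)$, cannot lie in too restrictive a pattern. The hypothesis $\ell \ge (19.1t)^9$ presumably encodes the precise quantitative balance between the rotation argument, the Turán bound, and the gap between the two regimes of the $\max$ defining $f_t$; tracking this balance tightly enough to pinpoint the length $2\ell$ (and not merely ``some long even cycle'') in every case is where I expect the bulk of the technical work to lie.
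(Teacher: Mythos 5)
Your overall architecture (two extremal constructions for the lower bound; ``all $2$-connected pieces have at most $2\ell-1$ vertices'' plus a vertex count for the upper bound) matches the paper's, and your lower-bound sketch is essentially the paper's $H_1$ and $H_2$ (in $H_2$ all cliques have size exactly $\lfloor(n-1)/t\rfloor+1$ and $t+1-k$ of them share one common vertex). But there are two genuine gaps in the upper bound. First, your route to the key ``cycle of length exactly $2\ell$'' lemma cannot work as described: the Bukh--Jiang bound $\ex(N,C_{2\ell})\le 80\sqrt{\ell}\ln\ell\,N^{1+1/\ell}+10\ell^2N$ is vacuous in this regime, since here $N\le (t+1)(2\ell-1)=O(t\ell)$, so $10\ell^2N$ already exceeds $\binom{N}{2}$ and the bound carries no information. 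The paper instead gets the exact cycle length by combining Voss--Zuluaga (a long even cycle exists in a $2$-connected graph of large minimum degree), the Gould--Haxell--Scott theorem (a graph with minimum degree $an$ contains every even cycle length in $[4,\mathrm{ec}(G)-\hat K(a)]$), and Williamson's panconnectedness result for the case where the piece has barely more than $2\ell$ vertices; the constant $(19.1t)^9$ comes from the GHS quantification, not from any Tur\'an-number bookkeeping.

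Second, you apply your pancyclicity lemma to the \emph{blocks} of $G$, but a block of a graph with large minimum degree need not itself have large minimum degree: a cut vertex can have only two neighbours inside a given block, so the lemma's degree hypothesis fails there and you cannot conclude that every block has at most $2\ell-1$ vertices. (Relatedly, your claim $\deg(v)=|B(v)|-1$ for non-cut vertices should be an inequality, which is harmless, but the block-size issue is not.) The paper avoids this by a different decomposition: it iteratively deletes cut vertices to obtain a set $U$ with $|U|\le s-1$ such that $G-U$ splits into $s$ vertex-disjoint $2$-connected graphs, each inheriting minimum degree $\ge\delta(G)-|U|$; it then reattaches each $u\in U$ to every piece in which $u$ has at least $4t$ neighbours, shows the augmented pieces still have at most $2\ell-1$ vertices via a Bondy--Chv\'atal closure/Hamiltonicity argument, and controls the overcounting $\sum_i|V_i|\le|V|+s-1$ by showing an auxiliary incidence graph is a forest (using Williamson again to turn any cycle in that forest into a $C_{2\ell}$). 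Your final counting step would need some substitute for all of this before it could be carried out.
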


The condition $\ell\ge (19.1t)^9$ in Theorem~\ref{thm1} above, which holds when, say, $n\le 0.1\ell^{10/9}$, follows from the bounds given by Lemma~\ref{l:GHS} below, one of the key ingredients of 
 our argument. It is certainly far from being optimal and we  suspect that the result  
holds for any $n$ growing polynomially with $\ell$, but it is  conceivable that it remains true  even for $n$ which grows exponentially with $\ell$. On the other hand, because of \eqref{eq1}, the assertion of Theorem~\ref{thm1} fails for, say,  $n\ge \ell^{2\ell}$. 

We remark that one can use a similar technique to find the value of $\ex(N,C_{2\ell})$ when $N$ is not much larger than 
$\ell$. In this case $\ex(N,C_{2\ell})$ is the same as $\ex(N,C_{\ge 2\ell})$, i.e. the maximum number of edges in a graph on $N$ vertices which contains no cycles of length {\sl at least} $2\ell$. Let us recall that $\ex(N,C_{\ge 2\ell})$,  was determined  already by Erd\H{o}s and Gallai~\cite{EG} who also described the structure of all extremal graphs for this problem -- it turns out that 
all their blocks, except perhaps one,  are cliques on $2\ell-1$ vertices. However,      
the behaviour of $R(C_{2\ell},K_{1,n})$ 
seemed to us more intriguing. Indeed, for a given $\ell$ and $(t-1)(2\ell-1)\le n-1< \frac{t^2}{t+1}(2\ell-1)$ we have 
$$  f_t(\ell,n) =(2\ell-1)t,$$
i.e. for this range of $n$ the value of  $R(C_{2\ell}, K_{1,n})$ does not depend on the size of the star. On the other hand, as is shown in the next section, for $\frac{t^2}{t+1}(2\ell-1)\le n-1< t(2\ell-1) $, when
$$ f_t(\ell,n) = n+ \lfloor (n-1)/t\rfloor\,$$
the `extremal graphs' which determine the value of $R(C_{2\ell}, K_{1,n})$  typically have all blocks smaller than $2\ell-1$.

\section{The lower bound for $R(C_{2\ell}, K_{1,n})$}\label{sec:lower}

In this section we show that for given integers $t$, $\ell$, and $n$ such that $(t-1)(2\ell-1) \le n-1 < t(2\ell - 1)$, we have
\begin{equation}\label{eq:lower}
R(C_{2\ell},K_{1,n}) >f_t(\ell,n)=\max\{t(2\ell-1), n+ \lfloor (n-1)/t\rfloor \}.
\end{equation}

Let us consider first the graph $H_1$ which consists of $t$ vertex-disjoint copies of the complete graph $K_{2\ell-1}$. 
Clearly, $|V(H_1)| = t(2\ell-1)$ and $H_1\nsupseteq C_{2\ell}$. 
Moreover, $\Delta(\xoverline H_1) = (t-1)(2\ell-1) \le n-1$ yielding $\xoverline H_1 \nsupseteq K_{1,n}$. Hence
$$R(C_{2\ell},K_{1,n}) >t(2\ell-1)\,.$$

Now let  $k=n-1 - t\lfloor (n-1)/t\rfloor$ and $m=\lfloor (n-1)/t\rfloor + 1$. We define a graph $H_2$ 
as a union of $k$ vertex-disjoint complete graphs $K_m$ and $t+1-k$ other copies of $K_m$ which are `almost'
vertex-disjoint except that  they share exactly one vertex (see Figure~\ref{fig}). 

	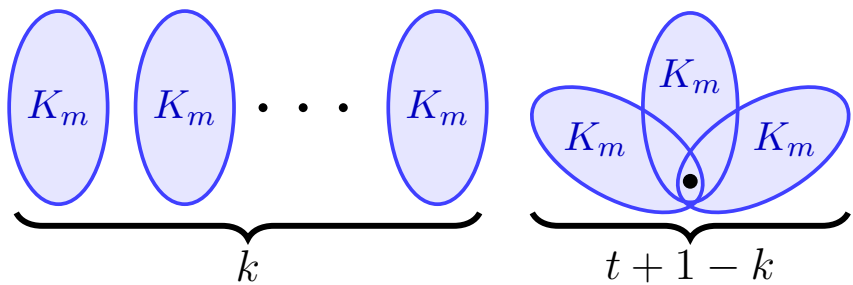
\begin{figure}[h!]
	
	\scalebox{1.4}{%
	\begin{tikzpicture}		
		\def\size{(.45cm  and 0.9cm)}
		\def \cin{blue!10!white}
		\def \cout{blue!75!white}
		
		\draw[\cout, line width=2pt,fill opacity=1] (-6,0) ellipse \size;
		\fill[\cin,opacity=1] (-6,0) ellipse \size;
		\node at (-6,0) {\large\textcolor{blue!75!black}{\small $K_m$}};
		
		\draw[\cout, line width=2pt,fill opacity=1] (-4.8,0) ellipse \size;
		\fill[\cin,opacity=1] (-4.8,0) ellipse \size;
		\node at (-4.8,0) {\large\textcolor{blue!75!black}{\small $K_m$}};
		
		\node at (-3.6,0) {\Huge $\dots$};
		
		\draw[\cout, line width=2pt,fill opacity=1] (-2.4,0) ellipse \size;
		\fill[\cin,opacity=1] (-2.4,0) ellipse \size;
		\node at (-2.4,0) {\large\textcolor{blue!75!black}{\small $K_m$}};

		\draw[ultra thick, decorate,decoration={brace,amplitude=7pt,mirror}] 
		($(-6.4,-1)$) coordinate (a)  -- ($(-2,-1)$) coordinate (c) ; 
		
		\node at (-4.2,-1.5){ $k$};

		\fill (0,0) circle (3pt);
		
		\fill[\cin,opacity=1, rotate around={0:(0,-.8)}] (0,0) ellipse \size;
		\fill[\cin,opacity=1,rotate around={-60:(0,-.8)}] (0,0) ellipse \size;
		\fill[\cin,opacity=1, rotate around={60:(0,-.8)}] (0,0) ellipse \size;
		
		\draw[\cout, line width=1pt,fill opacity=1, rotate around={60:(0,-.8)}] (0,0) ellipse \size;
		\draw[\cout, line width=1pt,fill opacity=1,rotate around={-60:(0,-.8)}] (0,0) ellipse \size;
		\draw[\cout, line width=1pt,fill opacity=1, rotate around={0:(0,-.8)}] (0,0) ellipse \size;
		
		\node at (0,.3) {\large\textcolor{blue!75!black}{\small $K_m$}};
		\node at (-.9,-.3) {\large\textcolor{blue!75!black}{\small $K_m$}};
		\node at (.9,-.3) {\large\textcolor{blue!75!black}{\small $K_m$}};

		\fill (0,-.7) circle (2pt);		
		
		\draw[ultra thick, decorate,decoration={brace,amplitude=7pt,mirror}] 
		($(-1.5,-1)$) coordinate (a)  -- ($(1.5,-1)$) coordinate (c) ; 
		
		\node at (0,-1.5){$t+1-k$};

	\end{tikzpicture}
	}
	\caption {Graph $H_2$. Here $k=n-1-t\lfloor\frac{n-1}{t}\rfloor$ and $m=\left\lfloor\frac{n-1}{t}\right\rfloor+1$}
	\label{fig}
\end{figure}

 Then 
\begin{align*}
|V(H_2)|&=km+(t+1-k)(m-1) + 1=(t+1)m-(t-k)\\
&=(t+1)(\lfloor (n-1)/t\rfloor + 1) -t + n-1 - t\lfloor (n-1)/t\rfloor\\
&=n+\lfloor (n-1)/t\rfloor.			
\end{align*}
Note also that  $n-1< t(2\ell-1)$, and so 
$m=\lfloor (n-1)/t\rfloor +1\le 2\ell-1$.  Hence $H_2\not\supseteq C_{2\ell}$.
Finally, 
\begin{align*}
\Delta(\xoverline H_2) = |V| - m= n+\lfloor (n-1)/t\rfloor   -\lfloor (n-1)/t\rfloor - 1 =n-1.
\end{align*}
Therefore 
$$R(C_{2\ell},K_{1,n}) >|V(H_2)|= n+ \lfloor (n-1)/t\rfloor,$$
and \eqref{eq:lower} follows.

Let us remark that the two graphs $H_1$ and $H_2$ we used above are by no means the only `extremal graphs' with $R(C_{2\ell},K_{1,n})-1$ vertices. Let us take,  for example, $n=4.1\ell$. Then 
$R(C_{2\ell},K_{1,n})=3(2\ell-1)+1$ and the lower bound  for $R(C_{2\ell},K_{1,n})$ is `certified' by  the graph $H'_1$ which consists of 
three vertex disjoint cliques $K_{2\ell-1}$. However, if we replace each of these cliques by a graph on $2\ell-1$ vertices and minimum degree $1.91\ell$, the complement of the resulting graph will again contain 
no $K_{1,n}$, so each such graph shows that $R(C_{2\ell},K_{1,n})>3(2\ell-1)$ as well. On the other hand,  adding to $H'_1$ a triangle with vertices in different cliques does not result in a copy of $C_{2\ell}$, so $H'_1$ is not even a maximal extremal graph certifying that   $R(C_{2\ell},K_{1,n})>3(2\ell-1)$.

\section{Cycles in 2-connected graphs}

In order to show the upper bound for $R(C_{2\ell},K_{1,n})$ we have to argue that large graphs with a sufficiently large minimum degree contain $C_{2\ell}$. In  this section we collect a number of  results on cycles in 2-connected graphs we shall use later on.

 Let us recall first that the celebrated  theorem of Dirac~\cite{D} states that each 2-connected graph $G$ on $n$ vertices contains a cycle of length at least $\min\{ 2\delta(G),n\}$, and, in particular, each graph with minimum degree at least $n/2$ is hamiltonian. 
 Below we mention some generalizations of this result. 
Since we are interested mainly in even cycles, we start with the following observation due to Voss and Zuluaga \cite{VZ}.

\begin{lemma}
\label{l:VZ}
Every $2$-connected graph $G$ on $n$ vertices contains an even cycle $C$ of length at least $\min\{2\delta(G), n-1\}$.\qed
\end{lemma}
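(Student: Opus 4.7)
The plan is to start from a longest cycle $C$ in $G$. By Dirac's theorem for 2-connected graphs, $|C|\ge\min\{2\delta(G),n\}$, so whenever $|C|$ is even, the required bound $|C|\ge\min\{2\delta(G),n-1\}$ is immediate. I therefore concentrate on the case when $\ell:=|C|$ is odd and seek to produce an even cycle whose length is only slightly shorter than $\ell$.

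First I handle the non-Hamiltonian case, where $\ell\le n-1$. Since $\ell$ is odd and $2\delta(G)$ is even, the Dirac bound $\ell\ge 2\delta(G)$ improves to $\ell\ge 2\delta(G)+1$. Pick any vertex $v\notin V(C)$. By 2-connectivity there exist two internally disjoint paths $Q_1,Q_2$ from $v$ to $C$, meeting $C$ at distinct vertices $a,b$. The two arcs of $C$ between $a$ and $b$ have total length $\ell$, so they are of opposite parity; concatenating $Q_1\cup Q_2$ with each arc gives two cycles whose lengths have opposite parity, one of which is even. Since $C$ is longest, both constructed cycles have length at most $\ell$, which forces $|Q_1|+|Q_2|$ to be at most each arc length. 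By exploiting the freedom to choose the attachment vertices $a,b$ inside the bridge of $C$ containing $v$ (made possible because $v$ has at least $\delta(G)$ neighbours, so there is considerable flexibility in choosing $Q_1,Q_2$), I would arrange for the even cycle to use the longer of the two arcs, of length close to $\ell$, yielding an even cycle of length at least $\ell-1\ge 2\delta(G)$.

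Next comes the Hamiltonian case $\ell=n$ with $n$ odd. If $\delta(G)\ge (n+1)/2$, then Bondy's pancyclicity theorem applies, since the only exception $K_{n/2,n/2}$ requires even order; hence $G$ contains cycles of all lengths $3,\dots,n$ and in particular an even cycle of length $n-1$. Otherwise $2\delta(G)\le n-1$ and I need only an even cycle of length at least $2\delta(G)$, which I would obtain from a chord of the Hamilton cycle: any chord splits $C$ into one shorter even and one shorter odd cycle, and a suitable choice of chord, guaranteed by the minimum degree condition through a simple averaging/pigeonhole argument on chord endpoints, produces an even sub-cycle of the required length.

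The main obstacle is the non-Hamiltonian case when $Q_1,Q_2$ are short and the parity constraint forces the even cycle to use the shorter of the two arcs: a naive application gives only an even cycle of length $|Q_1|+|Q_2|+r_{\min}$, which can be as small as $4$ and far less than $2\delta(G)$. Handling this parity-versus-length tradeoff is the technical heart of the proof and would require either a P\'osa-type rotation on a longest path, or a careful averaging over the attachment set on $C$, using that $v$ has $\ge \delta(G)$ neighbours (producing many attachments on $C$ when the bridge is small, or many alternative external vertices when it is not) to force a pair of attachments at a distance of the right parity and of size close to $\ell/2$.
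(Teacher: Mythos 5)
The paper does not actually prove this lemma: it is stated as a known theorem of Voss and Zuluaga and marked as such without proof, so there is no in-paper argument to compare yours against. Judged on its own terms, your write-up is an outline whose two essential steps are missing — and you say so yourself. In the non-Hamiltonian case you correctly observe that attaching two internally disjoint paths $Q_1,Q_2$ from an outside vertex $v$ to the longest cycle $C$ at $a,b$ yields two cycles of opposite parity, but the even one may consist of $Q_1\cup Q_2$ together with the \emph{shorter} arc, and nothing you write rules this out. The ``freedom to choose the attachment vertices'' is illusory in general: the bridge containing $v$ may meet $C$ in only two vertices, and the $\delta(G)$ neighbours of $v$ may all lie off $C$, so the degree condition gives no direct control over where $Q_1,Q_2$ land or over the parity of the arc lengths. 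Producing an even cycle of length at least $\ell-1$, or even merely of length at least $2\delta(G)$, genuinely requires the P\'osa-type rotation or global crossing-chord analysis that you defer to a final sentence; that \emph{is} the theorem, not a finishing touch.

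The Hamiltonian subcase with $n$ odd and $2\delta(G)\le n-1$ has the same defect: a single chord $v_1v_k$ to the farthest neighbour of one vertex only guarantees one arc of length roughly $\delta(G)+1$, and if its parity is wrong the even cycle you obtain is the complementary one, which can be short; no ``simple averaging on chord endpoints'' yielding an even cycle of length $2\delta(G)$ is exhibited, and I do not believe one exists at that level of simplicity — one needs to play chords at several vertices against each other. The parts of your argument that are complete (the reduction to the case of an odd longest cycle, the parity bookkeeping, and the appeal to Bondy's pancyclicity theorem when $\delta(G)\ge (n+1)/2$ with $n$ odd, where $K_{n/2,n/2}$ is excluded by parity) are correct, but they are the easy parts. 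As it stands this is a plan for a proof rather than a proof; either carry out the rotation/crossing-chord machinery in full or, as the paper does, cite Voss and Zuluaga.
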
	

The following  result by 
Bondy and Chv\'{a}tal~\cite{BC} shows that the condition $\delta(G)\ge n/2$, sufficient for hamiltonicity, can be replaced by a somewhat weaker one. 
 Recall that the closure of a graph $G=(V,E)$ is the graph obtained from $G$ by recursively joining pairs of non-adjacent vertices whose degree sum is at least $|V|$ until no such pair remains.

\begin{lemma}
\label{l:BC}
A graph  $G$ is hamiltonian if and only if its closure is hamiltonian. \qed  	
\end{lemma}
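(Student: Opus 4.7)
My plan is to reduce the statement to a single-edge version of the claim: if $u$ and $v$ are non-adjacent vertices of a graph $G$ on $n$ vertices with $\deg_G(u)+\deg_G(v)\ge n$, then $G$ is hamiltonian if and only if $G+uv$ is hamiltonian. Granted this, the closure of $G$ is by definition obtained from $G$ by a finite sequence of such single-edge additions, so induction on the number of added edges immediately yields the lemma.

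One direction of the single-edge claim is trivial: adding an edge cannot destroy a Hamilton cycle. For the converse I would argue by contradiction. Suppose $G+uv$ is hamiltonian but $G$ is not. Then every Hamilton cycle of $G+uv$ must use the edge $uv$, and deleting this edge from such a cycle exhibits a Hamilton path $u=x_1,x_2,\dots,x_n=v$ in $G$.

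The main step is a rotation/pigeonhole argument on this path. Let
\[
S=\{i:2\le i\le n,\ x_1x_i\in E(G)\}\qquad\text{and}\qquad T=\{i:2\le i\le n,\ x_{i-1}x_n\in E(G)\}.
\]
Since $x_1x_n=uv\notin E(G)$, we have $n\notin S$ and $2\notin T$, so both $S$ and $T$ are subsets of a common set of size $n-1$. On the other hand $|S|=\deg_G(u)$ and $|T|=\deg_G(v)$, so $|S|+|T|\ge n$ and the pigeonhole principle forces some $i\in S\cap T$. For this $i$ we have $x_1x_i\in E(G)$ and $x_{i-1}x_n\in E(G)$, so
\[
x_1,x_2,\dots,x_{i-1},x_n,x_{n-1},\dots,x_i,x_1
\]
is a Hamilton cycle of $G$, contradicting our assumption. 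The only place where one needs to be careful is in arranging $S$ and $T$ so that the absence of the edge $uv$ excludes one extra element from each potential universe, producing a common universe of size only $n-1$ against which the degree-sum assumption bites; beyond this, no serious obstacle arises.
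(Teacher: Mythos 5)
Your proof is correct and complete: the reduction to a single edge addition, the rotation/pigeonhole argument on the Hamilton path $x_1,\dots,x_n$ (with $n\notin S$ and $2\notin T$ correctly placing both sets inside a universe of size $n-1$ so that $|S|+|T|\ge n$ forces a common index, and hence the crossing Hamilton cycle), and the induction along the sequence of edges added to form the closure together constitute the standard Bondy--Chv\'atal argument. The paper itself gives no proof of Lemma~\ref{l:BC}, quoting it as a known result, so there is nothing to compare against; your write-up is exactly the classical proof one would supply.
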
  

If we allow $\delta(G)>n/2$, then, as observed by Bondy~\cite{B}, $G$ becomes pancyclic. We  use the following strengthening
of this result, proved under slightly stronger assumptions, due to Williamson~\cite{W}. 

\begin{lemma}
\label{l:W}
Every graph $G=(V,E)$ on $n$ vertices  with $\delta(G)\ge  n/2+1$ has the following property. For every $v,w\in V$ and every $k$ such that $2\le k\le n-1$, $G$ contains 
a path of length $k$  which starts at $v$ and ends at $w$.  In particular,  $G$ is pancyclic. \qed
\end{lemma}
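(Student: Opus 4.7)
Proof plan. This is the classical panconnectedness theorem of Williamson~\cite{W}; I sketch only the structural approach I would follow. The plan is to prove by induction on $k$ that for every pair $v,w\in V$ and every $2\le k\le n-1$ there is a $v$-$w$ path of length exactly $k$, handling the two extremes of the range separately. Pancyclicity is then an immediate corollary: for any $3\le r\le n$, take a Hamilton cycle (which exists by Dirac's theorem, since $\delta(G)>n/2$), pick an edge $uv$ of it, and close a $u$-$v$ path of length $r-1$ through $uv$ to obtain a cycle of length $r$.

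At the lower endpoint $k=2$, the degree condition gives $|N(v)|+|N(w)|\ge n+2$, hence $|N(v)\cap N(w)|\ge 2$ by inclusion–exclusion, so any common neighbour of $v$ and $w$ provides the length-$2$ path. At the upper endpoint $k=n-1$, I would invoke Ore's classical Hamilton-connectedness theorem, which already applies under the weaker hypothesis $\delta(G)\ge(n+1)/2$. For an intermediate $k$, given a $v$-$w$ path $P\colon v=x_0x_1\cdots x_k=w$ produced by the induction hypothesis, I would try to extend by one via a \emph{detour}: pick any $y\notin V(P)$ (which exists since $k<n-1$) and search for an edge $x_ix_{i+1}$ of $P$ with both endpoints in $N(y)$; replacing $x_ix_{i+1}$ by $x_iyx_{i+1}$ yields a $v$-$w$ path of length $k+1$. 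If no such detour exists for any $y\notin V(P)$, then $N(y)\cap V(P)$ must be an independent set of indices along $P$, and a short double-counting argument using $\deg(y)\ge n/2+1$ and $|V(P)|=k+1$ quickly yields $|N(y)\cap V(P)|\ge k+3-n/2$ while $|N(y)\cap V(P)|\le\lceil(k+1)/2\rceil$, a contradiction whenever $k$ is close to $n-1$.

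The main obstacle is the small-$k$ regime, where $V(P)$ is short compared with $\deg(y)$ and the naive count does \emph{not} force two consecutive neighbours of $y$ on $P$. For these values one has to bring in more structure: either (i) rotate $P$ at an internal vertex of high degree to move to a different $v$-$w$ path of the same length on which the detour now works, or (ii) start the induction instead from the top end and run it \emph{downward} from a Hamilton $v$-$w$ path by repeatedly cutting out one vertex via a short chord $x_{i-1}x_{i+1}$, arguing that such a chord always exists by the degree condition. The delicate point, and the place where Williamson's argument uses the hypothesis $\delta(G)\ge n/2+1$ in an essential way rather than merely Dirac's $\delta(G)\ge n/2$, is excluding the near-bipartite obstructions that could block both operations: under the weaker bound $\delta(G)\ge n/2$ the graph $K_{n/2,n/2}$ is Hamiltonian but admits no odd-length path between two vertices in the same part, so the extra ``$+1$'' in the hypothesis cannot be relaxed.
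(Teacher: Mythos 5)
First, a point of comparison: the paper does not prove Lemma~\ref{l:W} at all --- it is quoted as a known theorem of Williamson~\cite{W} and stated with a tombstone, so there is no in-paper argument to measure your attempt against. Judged as a self-contained proof, your sketch handles the two endpoints correctly: the common-neighbour count for $k=2$ is right (and indeed $N(v)\cap N(w)$ avoids $v$ and $w$ automatically), Ore's Hamilton-connectedness theorem does cover $k=n-1$, and the derivation of pancyclicity from panconnectedness is sound. The problem is the inductive step that is supposed to cover everything in between. Your own computation shows the detour is forced only when $k+3-n/2>\lceil (k+1)/2\rceil$, i.e.\ only when $k$ is within a constant of $n$; for the bulk of the range $3\le k\le n-4$ nothing in the argument produces a vertex $y\notin V(P)$ with two consecutive neighbours on $P$. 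You flag this honestly, but neither proposed repair is carried out, and one of them is false as stated.

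Concretely, the claim in your alternative (ii) that a chord $x_{i-1}x_{i+1}$ ``always exists by the degree condition'' fails: under $\delta(G)\ge n/2+1$ a vertex may still be non-adjacent to as many as $n/2-2$ others, which is enough to destroy every such chord along a given path. For example, with $n=8$ let $G$ be $K_8$ minus the six edges $x_0x_2$, $x_1x_3$, $x_2x_4$, $x_3x_5$, $x_4x_6$, $x_5x_7$; then $\delta(G)=5=n/2+1$, yet the Hamilton path $x_0x_1\cdots x_7$ admits no shortcut of the form $x_{i-1}x_{i+1}$. (Paths of length $6$ from $x_0$ to $x_7$ still exist --- which is exactly the point: the downward step must be allowed to re-route the path, not merely contract the current one.) Alternative (i), rotating the path until a detour becomes available, is where the real content of Williamson's proof lies, and it is precisely the part you leave as a gesture. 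So what you have is a correct treatment of the boundary cases plus an accurate diagnosis of where the difficulty sits, not a proof. For the paper this is immaterial, since the lemma is imported by citation; but as a proof attempt the middle range of $k$ remains a genuine gap.
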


Finally, we state a theorem of  Gould, Haxell, and Scott~\cite{GHS}, which is crucial for our argument. 
Here and below $\textrm{ec}(G)$ denotes the length of the longest even cycle in $G$.

\begin{lemma}
\label{l:GHS}
Let $a>0$, $\hat K = 75\cdot 10^4 a^{-5}$, and $G$ be a graph with $n\ge 45 \hat K/a^4$ vertices and minimum degree at least $an$. Then
for every even  $r\in [4,\textrm{ec}(G)-\hat K]$,  $G$ contains a cycle of length $r$. 
\end{lemma}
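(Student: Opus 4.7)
The plan is to analyse a longest even cycle $C$ of $G$ and produce cycles of every prescribed even length by rerouting short arcs of $C$ through paths supplied by the minimum-degree condition. Set $L=\textrm{ec}(G)$, fix a longest even cycle $C=v_0v_1\cdots v_{L-1}v_0$, and write $S=V(G)\setminus V(C)$.

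First I would show that $S$ is small and densely attached to $C$. Each vertex of $S$ has at least $an$ neighbours in $G$, so if some $u\in S$ had two consecutive neighbours $v_i,v_{i+1}$ on $C$ one could lengthen $C$ by replacing the edge $v_iv_{i+1}$ with the path $v_i\,u\,v_{i+1}$, contradicting maximality. A refined accounting of such insertions and two-step swaps along $C$ then shows that most vertices of $C$ must also send many chords into $C$ itself; otherwise a single swap through $S$ would again produce a longer even cycle.

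For a target even length $r\in[4,L-\hat K]$, the goal is to write $r=|A|+|P|$ where $A$ is an arc of $C$ from some $v_i$ to some $v_j$ and $P$ is an internally disjoint $v_i$-$v_j$ path. The natural choice is an arc $A$ of length $r-2s$ for some small $s$, with $P$ a $v_i$-$v_j$ path of length $2s$ built by a P\'osa-style rotation-and-extension argument inside a piece of $G$ that avoids the interior of $A$. The crucial ingredient is parity control: for the chosen pair $(v_i,v_j)$ one needs two $v_i$-$v_j$ paths whose lengths differ by exactly one, so that either parity of $|P|$ is realisable within the allowed budget. This flexibility can be obtained from an odd closed walk in a small neighbourhood of $C$ or from two interlocking short chords, and it accounts for most of the quantitative loss encoded in $\hat K=75\cdot 10^4\,a^{-5}$.

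The main obstacle is the simultaneous control of length, parity, and vertex-disjointness from $A$. The exponent $-5$ in $\hat K$ reflects five iterated uses of the minimum-degree hypothesis: bounding $|S|$, locating a large supply of chords, running the rotation argument, adjusting parity, and guaranteeing disjointness from $A$; the hypothesis $n\geq 45\hat K/a^4$ provides just enough room for all of these steps to coexist. Once $r$ lies at least $\hat K$ below $L$ the slack suffices everywhere, and the construction yields a cycle of length exactly $r$, as required.
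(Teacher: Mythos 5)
This statement is not proved in the paper at all: it is quoted as a known theorem of Gould, Haxell, and Scott \cite{GHS} and used as a black box, so there is no internal proof to compare against. Judged on its own terms, your text is a research plan rather than a proof. Every substantive step is deferred: the ``refined accounting of insertions and two-step swaps,'' the P\'osa-style rotation argument, the parity adjustment via ``an odd closed walk or two interlocking short chords,'' and the disjointness of $P$ from the arc $A$ are all asserted, not carried out. The explicit constants $\hat K=75\cdot 10^4 a^{-5}$ and $n\ge 45\hat K/a^4$ are ``explained'' by counting how many times you expect to use the degree hypothesis, which is not a derivation; nothing in the write-up could be checked against those numbers.

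There is also a concrete error in the one step you do argue. You claim that if some $u\in S$ has two consecutive neighbours $v_i,v_{i+1}$ on the longest even cycle $C$, then replacing the edge $v_iv_{i+1}$ by the path $v_i\,u\,v_{i+1}$ contradicts maximality. But this produces a cycle of length $L+1$, which is \emph{odd}, so it is not a longer \emph{even} cycle and yields no contradiction with the choice of $C$. The parity obstruction you wave away at the end is already fatal to your opening move; handling exactly this issue (finding length adjustments of even total increment, and separately securing a parity switch) is the heart of the Gould--Haxell--Scott argument and of the related work of Bondy--Vince and Voss--Zuluaga on even cycles. As it stands the proposal neither proves the lemma nor correctly begins to.
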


Let us also note the following consequence of the above results. 

\begin{lemma}\label{l:small}
For  $c\ge 1$ we set
\begin{equation}\label{eq:ks}
K(c) = 24\cdot 10^6 c^5 = 75\cdot 10^4 (1/2c)^{-5},
\end{equation}	
and let $\ell \ge 360c^4K(c)$. Then for every $2$-connected $C_{2\ell}$-free graph $H=(V,E)$ such that $|V|\le 2\ell c$ and
 $\delta(H) \ge \ell + K(c)$, we have
	\[
	|V|\le 2\ell - 1.
	\]
\end{lemma}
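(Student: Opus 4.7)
The plan is to argue by contradiction: assume $n := |V| \ge 2\ell$ and show that $H$ must contain a copy of $C_{2\ell}$. Since $H$ is $2$-connected, Lemma~\ref{l:VZ} yields an even cycle of length
\[
\textrm{ec}(H) \ge \min\{2\delta(H),\, n-1\} \ge \min\{2\ell + 2K(c),\, n - 1\}.
\]
I would then split into two regimes depending on how close $n$ is to $2\ell$.

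In the \emph{dense regime}, when $n \le 2\ell + K(c)$, the lower bound on $\delta(H)$ gives $\delta(H) \ge \ell + K(c) \ge n/2 + K(c)/2 \ge n/2 + 1$ (using $K(c) \ge 2$). Lemma~\ref{l:W} then forces $H$ to be pancyclic, and since $n \ge 2\ell$ this immediately produces $C_{2\ell}$, a contradiction. In the \emph{sparse regime}, when $n \ge 2\ell + K(c) + 1$, the Voss--Zuluaga bound simplifies to $\textrm{ec}(H) \ge 2\ell + K(c)$. I would then invoke Lemma~\ref{l:GHS} with $a = 1/(2c)$: since $n \le 2\ell c$, we have $\delta(H) \ge \ell \ge n/(2c) = an$; the constant is $\hat K = 75\cdot 10^4 (2c)^5 = 24\cdot 10^6 c^5 = K(c)$; and the size hypothesis $n \ge 45\hat K/a^4 = 720 c^4 K(c)$ is ensured by $n \ge 2\ell \ge 720 c^4 K(c)$ from the assumption $\ell \ge 360 c^4 K(c)$. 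Since $4 \le 2\ell \le \textrm{ec}(H) - K(c)$ and $2\ell$ is even, Lemma~\ref{l:GHS} produces a cycle of length exactly $2\ell$, again contradicting that $H$ is $C_{2\ell}$-free.

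The main obstacle is the coordination between the two lemmas: Gould--Haxell--Scott gives every even cycle length up to $\textrm{ec}(H) - K(c)$, so it only produces $C_{2\ell}$ when the longest even cycle exceeds $2\ell + K(c)$, which via Lemma~\ref{l:VZ} requires $n$ to be a bit larger than $2\ell + K(c)$. The choice of threshold $n = 2\ell + K(c)$ for the case split is dictated by this window, and the constants $K(c)$ and $360 c^4 K(c)$ in the statement are precisely the ones that make $a = 1/(2c)$ work inside Lemma~\ref{l:GHS}. Once that bookkeeping is set up, the argument is essentially two lines in each regime.
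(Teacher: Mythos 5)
Your proof is correct and follows essentially the same route as the paper's: a case split on $|V|$ near $2\ell$, using Williamson's pancyclicity lemma in the dense regime and Voss--Zuluaga combined with Gould--Haxell--Scott (with $a=1/(2c)$, $\hat K = K(c)$) in the sparse regime. The only difference is the exact threshold for the case split ($2\ell+K(c)$ versus the paper's $2\ell+2K(c)-2$), and both choices make the constants work.
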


\begin{proof}	
Let us consider first the case   $|V| < 2\ell + 2K(c)-2$. Then, since 
	\[
	\delta(H) \ge \ell +K(c) > { |V|}/{2}+1,
	\]
from Lemma~\ref{l:W} we infer  that $H$ is pancyclic. But $C_{2\ell} \nsubseteq H$ meaning that $|V|\le 2\ell - 1$, as required. 
	
	On the other hand, for $|V|\ge 2\ell + 2K(c)-2$  Lemma~\ref{l:VZ} implies that 
	\[
	\textrm{ec}(H) \ge 2\ell + 2K(c)-2>2\ell+K(c)
	\]
	Moreover, as $|V|\le 2\ell c$ and $\ell \ge 360c^4K(c)$, one gets 
	\[
	\delta(H)> \ell \ge \frac{1}{2c}|V|\quad \textrm{and}\quad |V| > 2\ell \ge 45\left(\frac{1}{2c}\right)^{-4}K(c).
	\]
	Therefore, from Lemma \ref{l:GHS} applied to $H$ with $a=1/(2c)$, we infer that $H$ contains a cycle of length ${2\ell}$, contradicting $C_{2\ell}$-freeness of $H$.
\end{proof}

\section{Proof of the main result}

The two examples of graphs we used to verify the lower bound for $R(C_{2\ell}, K_{1,n})$ (see Section~\ref{sec:lower}) 
suggest that a natural way to deal with the upper bound for $R(C_{2\ell}, K_{1,n})$ is to show first that each 
$C_{2\ell}$-free graph $G$ with a large minimum degree has all blocks smaller than $2\ell$.  However, most results on the existence of cycles in 2-connected graphs 
use the minimum degree condition, and even if the minimum degree of $G$ is large, some of its blocks may contain vertices of small degree.   
Nonetheless we shall prove that the set of vertices in each such $G$  contains a `block-like' family of  2-connected subgraphs without vertices of very small degree.  Then, based on the results of the last section, we argue that each subgraph  in such  family is small. 
In the third and final part of our proof we show that if this is the case, then $G$ 
has at most $f_t(\ell, n)$ vertices.  

Before the proof of Theorem~\ref{thm1} we  state two technical lemmata. The first one will become instrumental in the first part of our argument, 
when we decompose the graph $G$ into 2-connected subgraphs without vertices of small degree. 

\begin{lemma}\label{l:dec}
	Let  $n\ge k\ge 2$. 
For each graph $G$ with $n$ vertices and minimum degree $\delta(G)\ge n/k + k$, 	there exists an  $s<k$ and a set of vertices $U\subset V(G)$, $|U| \le s-1$, such that $G-U$ is a union of $s$ vertex-disjoint $2$-connected graphs. 
\end{lemma}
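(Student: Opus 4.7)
The plan is to decompose $G$ iteratively by peeling off cut vertices. Set $\mathcal{C}_{0}$ to be the set of connected components of $G$ and $U_{0}=\emptyset$. At step $i$, if every piece $C\in\mathcal{C}_{i}$ is 2-connected, stop; otherwise pick a non-2-connected piece $C\in\mathcal{C}_{i}$ and a cut vertex $v$ of $C$, set $U_{i+1}=U_{i}\cup\{v\}$, and replace $C$ in $\mathcal{C}_{i}$ by the connected components of $C-v$ to form $\mathcal{C}_{i+1}$. Since $|U_i|$ strictly increases, the process terminates in at most $n$ steps; at termination I output $U=U_{\text{final}}$ and $s=|\mathcal{C}_{\text{final}}|$.

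The argument rests on three invariants that I claim hold at every step $i$. The first is $|U_{i}|\le |\mathcal{C}_{i}|-1$, which follows because each step grows $|U|$ by exactly $1$ and $|\mathcal{C}|$ by at least $1$, starting from $|U_0|=0\le|\mathcal{C}_0|-1$. The second is that every piece $C\in\mathcal{C}_{i}$ and $u\in C$ satisfy $\deg_{C}(u)\ge \delta(G)-|U_{i}|$, hence $|C|\ge \delta(G)-|U_{i}|+1$: indeed any neighbour of $u$ in $G$ outside $U_i$ must lie in the same component of $G-U_i$ as $u$, namely $C$. The third is $|\mathcal{C}_{i}|<k$.

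The main step is the third invariant, proved by induction. For the base case, the second invariant applied to $i=0$ gives each component at least $\delta(G)+1\ge n/k+k+1$ vertices, so $|\mathcal{C}_{0}|\le n/(n/k+k+1)<k$. For the inductive step, assume $|\mathcal{C}_{i}|\le k-1$; then $|U_{i}|\le k-2$ by the first invariant and so $|U_{i+1}|\le k-1$. Summing the second invariant over $\mathcal{C}_{i+1}$ yields
\[
|\mathcal{C}_{i+1}|\bigl(\delta(G)-|U_{i+1}|+1\bigr)\le n-|U_{i+1}|,
\]
which, together with $\delta(G)\ge n/k+k$ and $|U_{i+1}|\le k-1$, rearranges to $(k-1)|U_{i+1}|<k(k+1)$ and hence $|\mathcal{C}_{i+1}|<k$.

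From the second invariant and the bound $|U_{i}|\le k-2$ I obtain $|C|\ge n/k+3\ge 4$ for every piece throughout the process, so every non-2-connected piece contains a cut vertex; hence the process never gets stuck and must terminate with every piece 2-connected. The main obstacle is the inductive verification of the third invariant, which requires the algebraic rearrangement above and, crucially, the bootstrapped bound $|U|\le k-2$ that it derives from the first and third invariants together.
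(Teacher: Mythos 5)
Your proof is correct and follows essentially the same route as the paper's: iteratively delete cut vertices, observe that the number of pieces always exceeds the number of deleted vertices by at least one, and use the minimum-degree hypothesis to force every piece to be large and hence bound the number of pieces by $k$. Your write-up is in fact slightly more careful than the paper's (e.g.\ checking that each piece has at least $3$ vertices, so that a connected non-$2$-connected piece genuinely contains a cut vertex), but the underlying argument is identical.
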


\begin{proof}
Consider a sequence  $U_0,U_1,\dots, U_{t}=U$ of subsets of $V$  which starts with $U_0=\emptyset$  and, if $G-U_i$ contains a cut vertex $v_i$,  we put 
$U_{i+1}=U_i\cup \{v_i\}$. The process terminates when each component of $G-U_i$ is 2-connected.
Note that in each step the number of components of a graph increases by at least one, so $G-U_i$ has at least $i+1=|U_i|+1$ components. Moreover, the process must terminate 
for $t< k-1$ since otherwise the graph $G-U_{k-1}$ would have $n-k+1$ vertices,  
at least $k$ components, and the minimum degree at least $n/k+1$ which, clearly, is impossible. 
Hence the graph  $G-U=G-U_t$ has $n-t$ vertices, $s\ge |U|+1= t+1$ components, and minimum degree larger than $n/k+1$. Finally, let us notice that, again, since each component has more than $n/k$ vertices, we must have $s<k$.    
\end{proof}

The following result is crucial for the final stage of our argument, when we show that each graph $G$ with a large minimum degree, 
which admits a certain block-like decomposition into small 2-connected subgraphs, cannot be too large. 

\begin{lemma}\label{l:V}
	For a given set $V$ and positive integers $\ell, s,t,n\ge 2$, satisfying $(t-1)(2\ell-1)\le n-1 < t(2\ell-1)$, 
	let $V_1, V_2, \dots, V_s$ be subsets of $V$ such that 
	\begin{enumerate}[label=\rmlabel]
		\item \label{it:1} $V = V_1\cup V_2\cup \dots \cup V_s$,
		\item \label{it:2} $|V_i|\le 2\ell-1$ for $i=1,2,\dots,s$,
		\item \label{it:3} $|V\setminus V_i|\le n-1$ for  $i=1,2,\dots,s$, 
		\item \label{it:4} $|V_1| + |V_2| + \cdots + |V_s| \le |V| + s -1$.
	\end{enumerate}
	Then 
	\[
	|V|\le f_t(\ell,n)= \max \{t(2\ell-1),n+\lfloor (n-1)/t\rfloor\}\,.
	\]
\end{lemma}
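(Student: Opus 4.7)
The plan is to set $N=|V|$ and split on whether $s\le t$ or $s\ge t+1$, aiming to recover the first term $t(2\ell-1)$ of $f_t(\ell,n)$ in the former case and the second term $n+\lfloor(n-1)/t\rfloor$ in the latter. The key preliminary observation, extracted from (i) and (iii), is that
\[
|V_i|\ge N-(n-1)\qquad\text{for every } i,
\]
because $V\setminus V_i\subseteq V$ has size at most $n-1$. So every $V_i$ is forced to be large as soon as $N$ is large, and this is the tension that will be played against hypothesis (iv).

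If $s\le t$, the trivial union estimate $N=|\bigcup_i V_i|\le\sum_{i=1}^{s}|V_i|$ together with (ii) already gives $N\le s(2\ell-1)\le t(2\ell-1)\le f_t(\ell,n)$. In this regime hypotheses (iii) and (iv) play no role.

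If $s\ge t+1$, I would combine (iv) with the lower bound just obtained:
\[
s(N-n+1)\ \le\ \sum_{i=1}^{s}|V_i|\ \le\ N+s-1,
\]
which rearranges to $(s-1)N\le sn-1$, whence $N\le n+(n-1)/(s-1)\le n+(n-1)/t$. Since $N$ is an integer, this forces $N\le n+\lfloor(n-1)/t\rfloor\le f_t(\ell,n)$.

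The argument is essentially a two-line counting calculation and I do not see any real obstacle; the only mild subtlety is noticing that hypotheses (ii) and (iii)--(iv) play entirely complementary roles in the two cases, and that the floor in the final bound comes for free from integrality of $N$. It is worth noting that the precise hypothesis $(t-1)(2\ell-1)\le n-1<t(2\ell-1)$ is not used in the bound itself; it only serves to identify the integer $t$ entering the definition of $f_t(\ell,n)$.
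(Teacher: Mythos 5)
Your proof is correct and follows essentially the same route as the paper: the case $s\le t$ is handled by (i) and (ii) alone, and for $s\ge t+1$ your inequality $s(N-n+1)\le\sum_i|V_i|\le N+s-1$ is just the paper's summation of $|V\setminus V_i|\le n-1$ over $i$ written in complementary form, leading to the identical bound $N\le n+(n-1)/(s-1)\le n+(n-1)/t$ and the floor by integrality. Your closing remark that the hypothesis $(t-1)(2\ell-1)\le n-1<t(2\ell-1)$ is not actually needed for the bound is accurate and applies equally to the paper's argument.
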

\begin{proof} Note first that if $s\le t$, then \ref{it:1} and \ref{it:2} imply that $|V|\le t(2\ell-1)$. Thus, let us assume that 
$s\ge t+1$. Then, 
	\begin{eqnarray*}
		s(n-1)
		&\overset{\textrm{\ref{it:3}}}{\ge}&
		\sum_{i=1}^s |V\setminus V_i|=s|V| - (|V_1|+|V_2|+\dots+|V_s|) \\
		&\overset{\textrm{\ref{it:4}}}{\ge }&
		s|V| - (|V|+s-1) = (s-1)|V| - (s-1),
	\end{eqnarray*}
	and thereby
	\[
		\hfill	|V| \le \frac{s}{s-1}(n-1) +1 = n + \frac{n-1}{s-1}\le n+ \frac{n-1}{t}.
	\]
	Since $|V|$ is an integer, the assertion follows. 
\end{proof}
  
\begin{proof}[Proof of Theorem~\ref{thm1}]
Since we have already bounded $R(C_{2\ell},K_{1,n})$ from below in Section~\ref{sec:lower}, 
we are left with the task of showing that 
\begin{equation*}\label{l:up}
R(C_{2\ell},K_{1,n}) \le f_t(\ell,n)+1.
\end{equation*}
For this purpose, let $t\ge 2$, 
\begin{equation*}\label{eq:el}
\ell \ge  (19.1t)^9> 
360(t+1)^4\cdot K(t+1),
\end{equation*}
where $K(t+1) = 24\cdot 10^6 (t+1)^5$ is the function defined in \eqref{eq:ks}, 
and 
\begin{equation*}
(t-1)(2\ell-1) \le n-1 < t(2\ell-1).
\end{equation*}
Moreover, let $G=(V,E)$ be a $C_{2\ell}$-free graph on 
\begin{equation*}\label{eq:Nf}
|V| 
=f_t(\ell,n)+1
\end{equation*}
vertices such that $\xoverline G \nsupseteq K_{1,n}$ (or equivalently, $\Delta(\xoverline G) \le n-1$).

Recall that $f_t(\ell, n) = \max\{t(2\ell-1), n+ \lfloor (n-1)/t\rfloor \}$ and observe that
	\begin{equation}\label{eq:estimate}
	(n-1)+\frac{ t(2\ell-1)}{t+1} <
	f_t(\ell,n)
	< (t+1)(2\ell-1)\,.
\end{equation}
Indeed, the upper bound follows immediately from the fact that $n-1 < t(2\ell - 1)$,
so it is enough to verify the lower bound for $f_t(\ell,n)$. If 
	\[
		(n-1) + \frac{t(2\ell-1)}{t+1} < {t(2\ell-1)}  
	\]
then we are done, otherwise we have   $$ \frac{t(2\ell-1)}{t+1}\le\frac{n-1}{t}$$ and,
since $f_t(\ell,n)\ge n+\lfloor \frac{n-1}{t}\rfloor$, \eqref{eq:estimate} holds as well.

Our aim is to show that $G$ contains a family of 2-connected subgraphs $G_i=(V_i,E_i)$, $i=1,2,\dots,s$, such that their vertex sets fulfil the conditions 
\ref{it:1}-\ref{it:4} listed in Lemma~\ref{l:V}.


We first  apply Lemma \ref{l:dec} to $G$ with $k = \frac{(t+1)^2+1}{t}$. We are allowed to do this, because \eqref{eq:estimate} tells us that 
\begin{equation}\label{eq:dd}
\begin{aligned}
\delta(G) &= |V|-1 - \Delta(\xoverline G) \ge f_t(\ell,n)-(n-1)>  \frac{t(2\ell-1)}{t+1}\ge \frac{t|V|}{(t+1)^2}
\end{aligned}
\end{equation}
However, both $|V|$ and $\ell$ are much larger than $t$, in particular, $|V|\ge 2\ell> (19.1t)^9$. Hence, 
\[
\delta(G) \ge \frac{t|V|}{(t+1)^2} 
>
\frac{t}{(t+1)^2+1}|V| + \frac{(t+1)^2+1}{t}	
\]
and the assumptions of Lemma \ref{l:dec} hold with  $k= \frac{(t+1)^2+1}{t}\le t+3$. Thus, 
there exists $s\le t+2$  and a set of vertices $U\subset V$, $|U|\le s-1$, 
such that $G-U$ is a union of $s$ vertex-disjoint, 2-connected graphs, $G'_i=(V'_i, E'_i)$.
Note that since $|U|\le t+1$ and  $\ell > 4K(t+1)$ are large, 
\begin{equation}\label{eq:deltaup}
\delta(G'_i) \ge \delta(G)-|U| > \frac{2(2\ell-1)}{3} - (t+1) > \ell + K(t+1).
\end{equation}
Moreover, clearly,  $|V'_i|\le |V| <(t+1)2\ell$, so Lemma \ref{l:small} applied to $G'_i$, with $c=t+1$, gives
\begin{equation*}\label{eq:Gi}
|V'_i| \le 2\ell - 1 \quad \textrm{for}\quad i=1,2,\dots,s.
\end{equation*}

Now, for every $i=1,2,\dots,s$, we define
\[
U_i = \{u\in U: \deg_G(u, V'_i) \ge 4t\},\quad  V_i = V'_i\cup U_i, \quad \textrm{and}\quad\ G_i = G[V_i].
\]
We will show that the sets $V_1, V_2, \dots, V_s$ 
satisfy the conditions \ref{it:1}-\ref{it:4} of the hypothesis of Lemma~\ref{l:V}. 

In order to verify \ref{it:1} observe that since the minimum degree of $G$ is large, i.e.\ $\delta(G)\ge 8t^2$, every vertex $u\in U$ belongs to at least one of the sets $U_i$, and therefore $V = V_1 \cup V_2 \cup \dots\cup V_s$. 

To prove that $|V_i|\le 2\ell -1$, let us assume that $|V_i|\ge 2\ell$. Now  take any subset     $\hat U_i$ of  $U_i$, with 
$|\hat U_i| = 2\ell - |V_i'|$ elements and set $H_i=G[V'_i\cup \hat U_i]$. Note that  $H_i$ has $2\ell$ vertices. 
We will argue that $H_i$ is hamiltonian. 
To this end, consider the closure of $H_i$. 
From \eqref{eq:deltaup} we know that all vertices from $V'_i$ have degree at least $\delta(G'_i)> \ell+K(t+1)$, 
so in the closure of $H_i$ the set $V'_i$ spans a clique of size at least $2\ell-|U|\ge 2\ell -t-1$. On the other hand, each vertex from $\hat U_i$ has in $V'_i$ at least $4t$ neighbours, so the closure of $H_i$ is the complete graph and therefore, by Lemma~\ref{l:BC}, $H_i$ is hamiltonian.
However it means that $C_{2\ell}\subseteq H_i\subseteq G$  which contradicts our assumption that $G$ is $C_{2\ell}$-free. Consequently, 
for every $i=1,2,\dots, s$, we have $|V_i|\le 2\ell-1$, as required by  \ref{it:2}. 

Note  that from \eqref{eq:deltaup} it follows that    $|V'_i|> \delta(G'_i)> \ell$. Since $U\setminus U_i$ sends at most $4t|U|\le 4t(t+1)<\ell$ edges to the set $V'_i$, there exists a vertex $v_i\in V'_i\subseteq V_i$ which has all its neighbours in $G_i$. It means however that, since $\xoverline G\not\supseteq K_{1,n}$, 
the set $V\setminus V_i$, which contains only vertices which are not adjacent to $v_i$,  has at most $n-1$ elements, and so \ref{it:3} holds.

Finally, to verify \ref{it:4} consider an auxiliary bipartite graph $F=(V_F, E_F)$, where $V_F=\{V'_1, V'_2, \dots, V'_s\}\cup U$ and 
	\[
	E_F=\{uV'_i:u\in U_i\}.
	\]
	We claim that $F$ is a forest. Indeed, assume for the sake of contradiction that $F$ contains a cycle $C=V'_{i_1}u_{j_1}\dots V'_{i_w}u_{j_w}V'_{i_{w+1}}$, ${i_1} = {i_{w+1}}$. Observe that every vertex $u_{j_x}$, $x= 1,2,\dots,w$, has at least  two neighbours in both sets $V'_{i_x}$ and $V'_{i_{x+1}}$.  Moreover,  $\delta(G'_i) > \ell+1$ and $|V'_i| \le 2\ell - 1$, so from Lemma~\ref{l:W} it follows that any two vertices of $V'_i$ can be connected by a path of length $y$ for every $y=2,3,\dots, |V_i|-1$.  Therefore, since  $w\le |U|\le t+1\le \ell/4$, the existence of $C$ in $F$ 
	implies the existence of  a cycle $C_{2\ell}$ in $G$, contradicting the fact that $G$ is  $C_{2\ell}$-free.
	
	Since  $F$ is a forest it contains at most $|U|+s-1$ edges, i.e.
	\[
\sum_{u\in U}\deg_F(u) \le |U|+s-1.
	\]
	Note that in the sum $|V_1|+|V_2|+\cdots+|V_s|$ each vertex from $\bigcup_i V'_i = V\setminus U$ is counted once, and 
	each vertex  $u\in U$ is counted precisely $\deg_F(u)$ times, so
	\[
	|V_1|+\dots +|V_s| = |V| -|U|+\sum_{u\in U}\deg_F(u)  \le  |V|+s-1,
	\]
	as required by \ref{it:4}.

Now we can apply Lemma~\ref{l:V} and infer that $|V|\le f_t(\ell,n)$ while we have assumed that $|V|=f_t(\ell,n)+1$. This final contradiction completes the
proof of the upper bound for $R(C_{2\ell}, K_{1,n})$ and, together with \eqref{eq:lower}, concludes the proof of Theorem~\ref{thm1}.
\end{proof}	

\begin{bibdiv}
	\begin{biblist}
		
\bib{B}{article}{
	author={Bondy, J. A.},
	title={Pancyclic graphs. I},
	journal={J. Combinatorial Theory Ser. B},
	volume={11},
	date={1971},
	pages={80--84},
}	

\bib{BC}{article}{
	author={Bondy, J. A.},
	author={Chv\'{a}tal, V.},
	title={A method in graph theory},
	journal={Discrete Math.},
	volume={15},
	date={1976},
	number={2},
	pages={111--135},
}	
		
\bib{BS}{article}{
	author={Bondy, J. A.},
	author={Simonovits, M.},
	title={Cycles of even length in graphs},
	journal={J. Combinatorial Theory Ser. B},
	volume={16},
	date={1974},
	pages={97--105},
}		
		
\bib{BJ}{article}{
	author={Bukh, B.},
	author={Jiang, Z.},
	title={A bound on the number of edges in graphs without an even cycle},
	journal={Combin. Probab. Comput.},
	volume={26},
	date={2017},
	number={1},
	pages={1--15},
}		

		\bib{D}{article}{
	author={Dirac, G. A.},
	title={Some theorems on abstract graphs},
	journal={Proc. London Math. Soc. (3)},
	volume={2},
	date={1952},
	pages={69--81},
}

\bib{EG}{article}{
	author={Erd\H{o}s, P.},
	author={Gallai, T.},
	title={On maximal paths and circuits of graphs},
	journal={Acta Math. Acad. Sci. Hungar.},
	volume={10},
	date={1959},
	pages={337--356 (unbound insert)},
}

\bib{ESi}{article}{
	author={Erd\H{o}s, P.},
	author={Simonovits, M.},
	title={A limit theorem in graph theory},
	journal={Studia Sci. Math. Hungar},
	volume={1},
	date={1966},
	pages={51--57},
}

\bib{ESt}{article}{
	author={Erd\"{o}s, P.},
	author={Stone, A. H.},
	title={On the structure of linear graphs},
	journal={Bull. Amer. Math. Soc.},
	volume={52},
	date={1946},
	pages={1087--1091},
}

	\bib{FS}{article}{
		author={F\"{u}redi, Z.},
		author={Simonovits, M.},
		title={The history of degenerate (bipartite) extremal graph problems},
		conference={
			title={Erd\H{o}s centennial},
		},
		book={
			series={Bolyai Soc. Math. Stud.},
			volume={25},
			publisher={J\'{a}nos Bolyai Math. Soc., Budapest},
		},
		date={2013},
		pages={169--264},
	}
		
\bib{GHS}{article}{
	author={Gould, R. J.},
	author={Haxell, P. E.},
	author={Scott, A. D.},
	title={A note on cycle lengths in graphs},
	journal={Graphs Combin.},
	volume={18},
	date={2002},
	number={3},
	pages={491--498},
}

\bib{LPS}{article}{
	author={Lubotzky, A.},
	author={Phillips, R.},
	author={Sarnak, P.},
	title={Ramanujan graphs},
	journal={Combinatorica},
	volume={8},
	date={1988},
	number={3},
	pages={261--277},
}

\bib{W}{article}{
	author={Williamson, J. E.},
	title={Panconnected graphs. II},
	journal={Period. Math. Hungar.},
	volume={8},
	date={1977},
	number={2},
	pages={105--116},
}

\bib{VZ}{article}{
	author={Voss, H.-J.},
	author={Zuluaga, C.},
	title={Maximale gerade und ungerade Kreise in Graphen. I},
	language={German},
	journal={Wiss. Z. Tech. Hochsch. Ilmenau},
	volume={23},
	date={1977},
	number={4},
	pages={57--70},
}

\bib{ZBC}{article}{
	author={Zhang, Y.},
	author={Broersma, H.},
	author={Chen, Y.},
	title={Narrowing down the gap on cycle-star Ramsey numbers},
	journal={J. Comb.},
	volume={7},
	date={2016},
	number={2-3},
	pages={481--493},
}
		
\end{biblist}
\end{bibdiv}		
 
\end{document}